\documentclass[11pt, letterpaper]{article}
\usepackage[margin=2.5cm]{geometry}
\usepackage{helvet}
\usepackage{courier}
\usepackage{url}
\usepackage{bm}
\usepackage{graphicx}
\usepackage{tcolorbox}
\usepackage[dvipsnames]{xcolor}
\usepackage{enumerate}
\usepackage{pgfplots}
\usepackage{algorithm}
\usepackage{eucal}
\usepackage{mathtools}
\usepackage{float}
\usepackage{amsthm}
\usepackage{amsmath,amssymb,amsfonts}
\usepackage{xcolor}
\usepackage{bbold}
\usepackage{xspace}
\usepackage{lettrine}
\usepackage{subfigure}
\usepackage{tikz}
\usetikzlibrary{backgrounds,automata}
\usetikzlibrary{shapes.geometric}
\usepackage{latexsym}
\usepackage{fancybox}
\theoremstyle{plain}
\usetikzlibrary{backgrounds,automata}
\usetikzlibrary{shapes.geometric}
\usepackage{latexsym}
\usepackage{fancybox}
\usepackage{hyperref}
\hypersetup{
    colorlinks=true,    
    citecolor=blue,     
    linkcolor=blue,     
    urlcolor=blue       
}

\theoremstyle{plain}

\newtheorem{theorem}{Theorem}[section]
\newtheorem{definition}[theorem]{Definition}
\newtheorem{lemma}[theorem]{Lemma}
\newtheorem{claim}[theorem]{Claim}

\newtheorem{corollary}[theorem]{Corollary}

\newtheorem{observation}[theorem]{Observation}
\newcommand{\Xsum}{|X_1|+|X_2|+\ldots+|X_n|}
\newcommand{\wXsum}{|X_1|+2|X_2|+\ldots +n|X_n|}

\newcommand{\tXsum}{|X_2|+|X_3|+\ldots+|X_n|}

\usepackage{setspace}
\usepackage{amsmath} 
\usepackage{cleveref}
\usepackage{graphicx}
\usepackage{authblk}

\Crefname{theorem}{Theorem}{Theorems}
\Crefname{claim}{Claim}{Claims}
\Crefname{observation}{Observation}{Observations}
\usepackage[backend=biber]{biblatex}
\usepackage{algpseudocode}
\begin{document}
\title{An upper bound on the proper hat guessing number of graphs}
\author{Ioannis Kakatelis \thanks{Czech Technical University,Czech Republic. \emph{E-mail}: \text{ioannis.kakatelis@fit.cvut.cz}.}
}

\date{}

\maketitle

\begin{abstract}
We study the proper hat guessing game on graphs, introduced by Adriaensen et al. \cite{adriaensen2026hatguessingpropercolorings}. In this game, the players are seated on the vertices of a graph $G$ and assigned hats from a set of $q$ colors such that the resulting assignment forms a proper coloring. The visibility of each vertex is limited to the hat colors of their neighborhood. Then they must simultaneously output a guess about the color of their own hat. The players win if at least one guess is correct. A parameter related to this problem is the proper hat guessing number $\operatorname{HG}_{P}(G)$ that is the maximum number of colors $m$ such that the players can guarantee a winning strategy. Motivated by the work of Shurman et al. in the upper bound on the ordinary hat guessing number, we establish the first upper bound that depends both on the number of vertices $n$ and the maximum degree $\Delta$ in the case where $\Delta \geq \frac{n}{e+1}$. This result leads us to show that the proper hat guessing number of the binomial random graph $G_{n,1/2}$ is bounded above by $cn$, where $c \approx 1.366$. Finally, we prove that graphs of maximum degree $(1-\gamma)n$ for some fixed $\gamma \in (0,1]$ cannot have $\operatorname{HG}_{P}(G) = (2-o(1))n$.
\end{abstract}

\begin{section}{Introduction}
In recent years, hat guessing games have attracted significant attention from a combinatorial perspective. The game is played on a graph $G = (V,E)$, where the vertices represent the players and each player receives a hat whose color is drawn from a fixed set $[q] = \{1,2,\ldots,q\}$. Each player has restricted visibility and can observe only the hat colors of their neighbors in $G$. Prior to the adversary assigning the hats, the players agree upon a deterministic guessing strategy, after which no further communication is permitted. The players must then simultaneously output a guess for the color of their own hat. Their objective is to find a strategy that guarantees at least one correct guess for every valid hat assignment chosen by the adversary. The parameter associated with this game is the hat guessing number, denoted by $\operatorname{HG}(G)$, which is defined as the largest integer $q$ for which the players can guarantee a winning strategy.

The classical hat guessing number has been studied extensively in a series of papers during last years \cite{alon2020hat,alon2022hat,wang2023hat,he2020hat,knierim2023hat,latyshev2021hats,latyshev2024hat}. Recently, other variants of this game has been considered allowing the hat guessing number to be fractional \cite{blavzej2023bears} or allowing the players to have more than one guesses \cite{bosek2021hat}. In this paper we will be focused on a variant that was introduced by Adriaensen et al.~\cite{adriaensen2026hatguessingpropercolorings} where now the adversary is placing the hats in a way that produces a proper coloring in the graph, such that no two neighboring players will receive the same hat color. The parameter of interest in this variant is denoted by $\operatorname{HG}_{P}(G)$ as the largest number $q$ of colors such that for graph $G$ there exists a strategy where for every proper coloring of G using $q$ colors, at least one vertex guesses correctly.
\newpage
The question posed by Alon et al. \cite{alon2020hat} of whether the hat-guessing number admits an upper bound depending solely on the maximum degree has attracted considerable attention within the research community.
 An evidence on that question was provided in \cite{alon2020hat}, that for every graph $G$ with maximum degree $\Delta$ via an application of Lov\'{a}sz Local Lemma gives that $\operatorname{HG}(G) < e\Delta$. A very recent result in this direction was proved by \cite{shurman2026upper} where they found an upper bound of $\operatorname{HG}(G)$ that is bounded by a function that depends from both $n$ and $\Delta$, where $\Delta > \frac{n}{e}$.

This question was also studied in other variants of this game. Regarding the variant of fractional hat guessing number it was shown in \cite{blavzej2023bears} that the fractional hat guessing number of any graph is at least $\Omega\left(\frac{\Delta}{\log\Delta}\right)$. In the proper variant of the game was shown in  \cite{adriaensen2026hatguessingpropercolorings} that $\operatorname{HG}_{P}(G) < (e+1)\Delta$.

\vspace{1em}

Our main result is to establish the first non-trivial upper bound for the proper hat guessing number in terms of the number of vertices \(n\) and the maximum degree $\Delta$.

\begin{theorem}\label{thm:main_bound}
For any graph $G$ on $n$ vertices with maximum degree $\Delta$, 
$$\operatorname{HG}_{P}(G) \leq \frac{3}{2}+ \Delta+ \frac{\sqrt{2n^{2}+2n\Delta-10n+9}}{2}. $$
\end{theorem}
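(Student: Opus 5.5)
The plan is an averaging argument over a random proper colouring, in the spirit of Shurman et al. Fix any guessing strategy $(f_v)_{v\in V}$ with $q$ colours, where $f_v$ is a function of the colours on $N(v)$. I want to exhibit a proper colouring in which no vertex guesses correctly. Equivalently, I will show that the expected number of correct guessers is $<1$ whenever $q$ exceeds the right-hand side of Theorem~\ref{thm:main_bound}. The colouring is built sequentially. Order the vertices $v_1,\dots,v_n$ and colour $v_i$ uniformly at random from the colours not already used on its previously coloured neighbours. Since $v_i$ has at most $\Delta$ neighbours, there are always at least $q-\Delta$ available colours, so the process never gets stuck and always outputs a proper colouring.

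\textbf{Step 1: one vertex.} For each $v_i$ I bound $\Pr[\text{$v_i$ guesses correctly}]$. If every neighbour of $v_i$ precedes it, then the guess $f_{v_i}$ is already fixed when $v_i$ is coloured. The guess is then hit with probability at most $1/(q-\Delta)$. The first-order union bound gives $n/(q-\Delta)$, which only yields $q\le n+\Delta$, so this alone is not enough.

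\textbf{Step 2: second-order structure.} The point of the sequential process is that a correct guess requires two coincidences, not one. The colour of $v_i$ must agree with a function of its neighbourhood. At the same time, the neighbourhood colours that are chosen after $v_i$ must not destroy this agreement. My plan is to charge each correct-guess event to an ordered pair $(v_j,v_i)$ with $j<i$, namely the pair of the vertex $v_i$ and the last vertex $v_j$ before it whose colour determines the relevant data. I then bound the probability of each charged pair by roughly $1/\bigl((q-\Delta-c)(q-\Delta-c')\bigr)$ for small constants $c,c'$. These constants account for the few colours forbidden by the conditioning, and they are what produce the $3/2$ and the linear terms in the final bound. Summing over the at most $\binom{n}{2}$ pairs, together with an extra $O(n\Delta)$ contribution from pairs inside neighbourhoods, gives an expectation of the shape
\[
\frac{n^2+n\Delta-O(n)}{2\,(q-\Delta-a)(q-\Delta-b)}.
\]
Setting this $<1$ and solving the quadratic in $q$ gives exactly a bound of the form $q\le \frac32+\Delta+\frac12\sqrt{2n^2+2n\Delta-10n+9}$. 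I would then choose the constants $a,b$ and the lower-order terms so that the discriminant matches $2n^2+2n\Delta-10n+9$.

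\textbf{Main obstacle.} The hard part is Step 2: making the pair-charging rigorous. The guess of $v_i$ depends on colours of neighbours that may be chosen after $v_i$, so conditioning on the history does not make the events independent, and the number of forbidden colours changes as the process runs. What I would try first is to condition on all colours except those of $v_i$ and one other chosen vertex. I would then show that, in the remaining two-dimensional choice, the set of colour pairs giving a correct guess is a partial matching-like set of size at most $q$. Its probability would then be at most $q/\bigl((q-\Delta)(q-\Delta-1)\bigr)$ per relevant pair, which is what the quadratic bookkeeping needs.
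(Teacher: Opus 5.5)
\textbf{The proposal has a genuine gap: its reduction is false in exactly the range the theorem covers.}

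Your reduction, ``equivalently, show that the expected number of correct guessers is $<1$'', is only a sufficient condition. Where the theorem has content, it does not hold, so Step~2 cannot be made rigorous in any form.

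Take $G$ to be the empty graph on $n$ vertices ($\Delta=0$), and let every vertex always guess colour $1$. Your sequential process is then uniform on $[q]^n$, so the expected number of correct guessers is exactly $n/q$. The right-hand side of the theorem is $\frac32+\frac12\sqrt{2n^2-10n+9}\approx n/\sqrt2$, which is about $6.7$ for $n=10$. For $q$ just above the bound (e.g.\ $n=10$, $q=7$) the expectation is $10/7>1$, and asymptotically it is about $\sqrt2$. Your target expression $\frac{n^2+n\Delta-O(n)}{2(q-\Delta-a)(q-\Delta-b)}$ would instead be about $1$.

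The heuristic that a correct guess ``requires two coincidences'' is wrong. A vertex whose neighbours are all coloured before it (for instance an isolated vertex) needs only one coincidence, exactly as in your Step~1, so the expectation is genuinely of first order. Your bookkeeping is also internally inconsistent:
\begin{itemize}
\item a per-pair bound $q/\bigl((q-\Delta)(q-\Delta-1)\bigr)\approx 1/q$, summed over $\binom n2$ pairs, gives about $n^2/(2q)$, not $n^2/(2q^2)$;
\item the constants $a,b$ are fitted to the answer rather than derived.
\end{itemize}

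\textbf{What is missing} is any use of the hypothesis that the strategy \emph{wins}, i.e.\ that every proper colouring has at least one correct guesser. The point is to use it to force the \emph{average} number of correct guessers strictly above $1$.

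The paper keeps the first-moment bound $\mathbb{E}[R_S(c)]\le n/(q-\Delta)$. This is taken over the uniform distribution on proper colourings, not your sequential process: given all other colours, a vertex's colour is uniform on at least $q-\Delta$ colours. It then proves a competing lower bound by double counting in the bipartite graph $H$:
\begin{itemize}
\item If $c\in X_1$ has unique correct guesser $v$, recolouring any $u\neq v$ with $uv\notin E(G)$ to $u$'s own guess leaves the views of both $u$ and $v$ unchanged. This gives a colouring with at least two correct guessers, so $\deg(c)\ge n-1-\Delta$.
\item Each $c'\in X_i$ has $\deg(c')\le i(q-2)$.
\end{itemize}
This yields $\sum_i i|X_i|\ge \sum_i|X_i|+\frac12|X_1|\frac{n-1-\Delta}{q-2}$. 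Combined with $|X_1|\ge\bigl(2-\frac n{q-\Delta}\bigr)\sum_i|X_i|$, it gives
\[
\frac{n}{q-\Delta}\ge 1+\Bigl(2-\frac{n}{q-\Delta}\Bigr)\frac{n-1-\Delta}{2(q-2)},
\]
whose solution in $q$ is exactly the stated bound.

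Without this ``many colourings have two correct guessers'' step, a pure first-moment argument cannot push the bound below about $q\approx n$ for sparse graphs; the empty graph already shows this. The theorem, by contrast, asserts roughly $n/\sqrt2+\Delta$.
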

An important application of this bound is to capture the behavior in the proper hat guessing number of the binomial random graph. It is worth mentioning at that point that the maximum degree of the binomial random graph $G_{n,\frac{1}{2}}$ is $(1+o(1))\frac{n}{2} \geq \frac{n}{e} > \frac{n}{e+1}$, so that would imply that the application of Lov\'{a}sz Local Lemma does not yield a non-trivial result in the proper case.

The first result on this direction was shown by Bosek et al. \cite{bosek2021hat} about the ordinary hat guessing number where they derived a lower and an upper bound on the binomial random graph with high probability, when the probability is tending to one and $n$ tends to infinity, 
$$(2-o(1))\log_{2}n \leq \operatorname{HG}(G_{n,1/2}) \leq n-(1+o(1))\log_{2}n.$$
In a following work, Alon and Chizewer \cite{alon2022hat} managed to improve the lower bound and they proved that $\operatorname{HG}(G_{n,1/2}) \geq n^{1-o(1)}$ using the method of identifying a subgraph with arbitrary large ordinary hat guessing number. Later, Wang and Chen \cite{wang2023hat} extended the previously known results by showing that for any constant number $p$, it holds
$$n^{1-o(1)} \leq \operatorname{HG}(G) \leq (1-o(1))n$$ with high probability when $G \in \mathcal{G}(n,p)$ where $\mathcal{G}(n,p)$  denote the Erd\H{o}s--R\'enyi random graphs with n vertices and edge-chosen
probability $p \in (0, 1)$.
Recently, Shurman et al. \cite{shurman2026upper} proved a better upper bound stated as follows, 
$$\operatorname{HG}(G_{n,1/2}) \leq (1+o(1))\left(\frac{1+\sqrt{5}}{4}\right)n \approx 0.809n.$$
As a consequence of our main theorem, we obtain the first upper bound for the proper hat guessing number of the binomial random graph, as stated below.
\vspace{1.5em}
\begin{corollary}\label{cor:G_n,1/2}
With high probability, 
$$\operatorname{HG}_{P}(G_{n,1/2}) \leq (1+o(1)) \left(\frac{1+\sqrt 3}{2}\right)n \approx 1.366n$$
\end{corollary}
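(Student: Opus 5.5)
The corollary is a direct application of Theorem~\ref{thm:main_bound}: we plug in the maximum degree of $G_{n,1/2}$. The only input needed is the standard concentration fact that, with high probability, $\Delta(G_{n,1/2}) = (1+o(1))\frac{n}{2}$. Every degree is distributed as $\mathrm{Bin}(n-1,1/2)$. A Chernoff bound together with a union bound over the $n$ vertices gives
$$\Delta(G_{n,1/2}) \le \frac{n}{2} + O\bigl(\sqrt{n\log n}\bigr)$$
with probability $1-o(1)$. In particular $\Delta \le (1+o(1))\frac n2$ w.h.p. Only this upper bound is needed, since the right-hand side of Theorem~\ref{thm:main_bound} is increasing in $\Delta$.

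On this high-probability event we substitute $\Delta = (\tfrac12+o(1))n$ into Theorem~\ref{thm:main_bound}:
$$\operatorname{HG}_P(G_{n,1/2}) \le \frac32 + \Bigl(\frac12+o(1)\Bigr)n + \frac{\sqrt{2n^2 + (1+o(1))n^2 - 10n + 9}}{2}.$$
Under the square root, the terms $-10n+9$ are $o(n^2)$, so the radicand equals $(3+o(1))n^2$. Hence the square-root term is $\frac{\sqrt3}{2}(1+o(1))n$. The constant $\frac32$ is also $o(n)$. Adding the three contributions gives
$$\operatorname{HG}_P(G_{n,1/2}) \le (1+o(1))\,\frac{1+\sqrt3}{2}\,n \approx 1.366n.$$

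The only point needing care is the probabilistic input: we must make sure that the $o(1)$ in the degree bound is uniform over all vertices. This is exactly what the union bound provides, since the Chernoff tail at deviation $\sqrt{n\log n}$ is $n^{-c}$ for large enough $c$. Everything after that is routine asymptotic simplification, using monotonicity of the bound in $\Delta$ to pass from the inequality on $\Delta$ to the inequality on $\operatorname{HG}_P$.
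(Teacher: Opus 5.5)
Your proposal is correct and follows the paper's proof: substitute the high-probability bound $\Delta(G_{n,1/2}) \le \frac{n}{2} + O(\sqrt{n\log n})$ into Theorem~\ref{thm:main_bound} and note that the radicand becomes $(3+o(1))n^2$. The differences are cosmetic: you derive the maximum-degree concentration yourself via a Chernoff bound and a union bound rather than citing Bollob\'as, and you state explicitly the monotonicity of the bound in $\Delta$, which the paper uses implicitly when it writes $\Delta = \frac{n}{2} + \epsilon$.
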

\begin{corollary}\label{cor:epsilon_delta}
For any graph $G$ on $n$ vertices with maximum degree $\Delta$ we have that: 
$$\operatorname{HG}_{P}(G) \leq \frac{5\Delta}{4} + \frac{3n}{4} + \frac{3}{2}.$$
\end{corollary}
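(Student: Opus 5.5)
Corollary~\ref{cor:epsilon_delta} should follow from Theorem~\ref{thm:main_bound} by an elementary estimate of the square root term. Writing the bound of Theorem~\ref{thm:main_bound} as $\frac32+\Delta+\frac12\sqrt{2n^2+2n\Delta-10n+9}$, it suffices to show
$$\sqrt{2n^{2}+2n\Delta-10n+9}\;\le\;\frac{\Delta}{2}+\frac{3n}{2},$$
since then the whole bound is at most $\frac32+\Delta+\frac{\Delta}{4}+\frac{3n}{4}=\frac{5\Delta}{4}+\frac{3n}{4}+\frac32$.

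Both sides are nonnegative, so I will square and compare. The right side squared is $\frac{\Delta^2}{4}+\frac{3n\Delta}{2}+\frac{9n^2}{4}$. Subtracting the radicand gives
$$\frac{\Delta^{2}}{4}-\frac{n\Delta}{2}+\frac{n^{2}}{4}+10n-9=\frac{(n-\Delta)^{2}}{4}+10n-9,$$
which is nonnegative whenever $n\ge 1$. For $n\ge1$ the inequality therefore holds, and the empty graph is trivial. The radicand itself is always positive: $2n^2-10n+9$ has discriminant $100-72>0$, but for integer $n$ its smallest value is $2\cdot4-20+9=-3$ at $n=2,3$, and $2n\Delta\ge0$. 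If the radicand is negative (only for edgeless tiny cases), Theorem~\ref{thm:main_bound} is vacuous or meaningless there. Those cases can be handled directly, since $\operatorname{HG}_P(G)\le \Delta+1$ trivially; $\Delta+1\le \frac{5\Delta}{4}+\frac{3n}{4}+\frac32$ always holds.

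The main (mild) obstacle is choosing the right linear upper bound for the square root. The natural choice is the tangent-line/AM–GM type bound $\sqrt{xy}\le\frac{x+y}{2}$, picked so that the leading quadratic terms complete a perfect square $(n-\Delta)^2$. Once the coefficients $\frac{\Delta}{2}+\frac{3n}{2}$ are guessed, the rest is a one-line check.
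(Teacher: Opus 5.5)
Your proof is correct and is essentially the paper's argument: the paper drops the $-10n+9$ term and applies AM--GM in the form $\sqrt{(2n)(n+\Delta)}\le\frac{3n+\Delta}{2}$, which is exactly your squared comparison producing the $\frac{(n-\Delta)^2}{4}\ge 0$ term. One correction to your side remark on the negative-radicand case, which the paper silently ignores: $\operatorname{HG}_P(G)\le\Delta+1$ is false in general (e.g.\ $\operatorname{HG}_P(K_n)=2n-1$), but you only need it for edgeless $G$ with $n\in\{2,3\}$, where $\operatorname{HG}_P(G)=1$ because each vertex sees nothing and its fixed guess is defeated once there are two colors.
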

According to the previous corollary we have that for any $\gamma > 0$ and any graph $G$ on $n$ vertices, for $n$ to be sufficiently large if $\Delta(G) \leq (1-\gamma)n$ then $\operatorname{HG}_{P}(G) \leq (2-\zeta)n$ for some $\zeta(\gamma)$ independent of $n$. \\ 

\textbf{Main proof outline.} Let $G$ be a graph where each vertex is given one out  of $k$ available colors, but in the restriction that neighboring vertices receive different colors and let $S$ be a winning strategy for the proper coloring game on the graph $G$. If a coloring $c$ was chosen at random, then if we condition on the colors of the neighbors, we can see that each vertex $v$ guesses their hat color correctly with probability $ \leq \frac{1}{k-\Delta}$, where $\Delta \coloneqq \Delta(G)$ to be the maximum degree in the graph $G$. We define as well $R_{S}(c)$ to be the number of correct guesses under the proper coloring $c$ and the strategy $S$.
\begin{lemma}\label{lem:exp_Delta}
$\mathbb{E}[R_S(c)] \leq \frac{n}{k-\Delta}$
\end{lemma}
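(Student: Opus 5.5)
We take $c$ to be a uniformly random proper coloring of $G$ with colors from $[k]$, and assume $k>\Delta$ so that such colorings exist. Linearity of expectation splits the count over vertices:
$$\mathbb{E}[R_S(c)] = \sum_{v\in V}\Pr[S_v(c|_{N(v)}) = c(v)],$$
where $S_v$ is the guessing function of $v$ and depends only on the colors of $N(v)$. So it suffices to show $\Pr[S_v(c|_{N(v)}) = c(v)] \le \frac{1}{k-\Delta}$ for each $v$, and then sum over the $n$ vertices.

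Fix $v$ and condition on the colors $c|_{V\setminus\{v\}}$ of all other vertices; this is a stronger conditioning than on $N(v)$ alone. The guess $g=S_v(c|_{N(v)})$ is then determined. The conditional distribution of $c(v)$ is uniform over the set $A$ of colors in $[k]$ not used on $N(v)$. This is because the uniform measure on proper colorings restricts, on the fiber with a fixed coloring of $V\setminus\{v\}$, to a uniform measure on the admissible values of $c(v)$. We have $|A| \ge k-|N(v)| \ge k-\Delta$, and $A\neq\emptyset$ whenever this conditioning event has positive probability. Hence the conditional probability that $c(v)=g$ is at most $1/|A| \le 1/(k-\Delta)$. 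Averaging over the conditioning via the law of total probability gives the same bound unconditionally, and summing over $v$ yields $\mathbb{E}[R_S(c)]\le \frac{n}{k-\Delta}$.

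The only delicate point is the uniformity claim. The marginal of $c(v)$ given only $N(v)$ need not be uniform over the free colors, because different values of $c(v)$ can extend to different numbers of colorings of the rest of the graph. For this reason we condition on everything outside $v$ rather than just on the neighborhood. With that conditioning, the fiber consists exactly of the colorings that agree off $v$, and these are in bijection with the colors in $A$, so uniformity is immediate.
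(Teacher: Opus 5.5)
Your proof is correct and follows the paper's argument: linearity of expectation over the vertices, where each vertex guesses correctly with probability at most $\frac{1}{k-\Delta}$ because at least $k-\Delta$ colors remain free for it. One correction to your last paragraph: conditioning only on $c|_{N(v)}$, as the paper does, already makes $c(v)$ uniform on the free colors, because $v$ has no neighbors outside $N(v)$ and so the fiber is the product of the free colors at $v$ with the proper extensions to $V\setminus N[v]$; your stronger conditioning is valid, but the ``delicate point'' you describe does not actually arise.
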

\begin{proof}
Let us define the event \[
S_u = \{\, \text{vertex}\  u \text{ guesses correctly under coloring } c \,\}.
\] Now we know that the number of available colors are $k - |c(N(u))|$ and $|c(N(u))| \leq \Delta$, so there are at least $k-\Delta$ valid available choices for vertex $u$. So the probability of the event $S_{u}$ is now $\mathbb{P}(S_{u}) \leq \frac{1}{k-\Delta}$ and by applying the linearity of expectation we can get that 
$$\mathbb{E}[R_{S}(c)] = \sum_{u \in V} \mathbb{P}(S_{u}) \leq \frac{n}{k-\Delta}$$.
\end{proof}
\end{section}
\begin{section}{Construction of the bipartite graph $H$}
The construction is heavily inspired by the one that was provided in \cite{shurman2026upper} adapting it to the case of proper colorings. Here we note that $X_{1}$ represents the set of all proper colorings of $G$ where exactly one vertex guesses correctly its hat color. The rest of the construction is based on the idea to find the underlying colorings that lie in $X_{1}$ and their nearby colorings, these coloring will play a key role to obtain the upper bound that will lead us to our desired result.
\begin{definition}
Let $G$ be a graph on $n$ vertices where each vertex is assigned one of $k$ colors, and let $S$ be a winning guessing strategy on $G$. Define the vertex set of the auxiliary graph $H$ as the set of all colorings using $k$ colors of $G$. We call this set $C$. In $H$, colorings $c \in X_1$ and $c' \notin X_1$ are adjacent if for some $u \in V(G)$:
\begin{enumerate}[(P\arabic*)]
    \item If $v$ is the unique vertex that guessed correctly in $c$, $u \neq v$ and $uv \notin E(G)$. 
    \item For all $w \in V(G) \setminus \{u\}$, $c(w)=c'(w)$.
    \item $S$ guesses the color of $u$ correctly in $c'$. 
\end{enumerate}
\end{definition}

To properly analyze this auxiliary graph, we will need to more precisely describe the colorings not in $X_1$. For each $2 \leq i \leq n$, define:
$$X_i:= \left\{c \in C:R_{S}(c)=i, \deg_H(c) \neq 0  \right\}$$ 
where $R_{S}(c)$ is the number of vertices that guess correctly when $G$ is colored by $c$. We exclude the colorings with degree $0$ in $H$ because they cannot be analyzed with our approach.
Let $Y$ be the set of these excluded colorings:
$$Y:=C \setminus \bigcup_{i=1}^n X_i.$$
Note that the $X_i$ for ${1 \leq i \leq n}$, together with $Y$, partition the vertices and the set of proper colorings in $H$, so: $$\sum_{i=1}^n |X_i|+|Y|=|C|= P(G,k).$$
where $P(G,k)$ represents the chromatic polynomial of the graph $G$ and denotes the number of proper $k-$colorings of the graph $G$. \\ 
The previous definitions and the rule of properness leads us to the following important inequality:
\begin{lemma}\label{lem:eq_first}
If $\frac{n}{k-\Delta} < 2$, $$\frac{n}{k-\Delta} \geq \frac{|X_{1}| + 2|X_{2}|+\ldots+n|X_{n}|}{X_{1}| + |X_{2}| + \ldots+|X_{n}|}$$
\end{lemma}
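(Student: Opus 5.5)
We compare the mean of $R_S$ over all proper colorings with its mean over the colorings in $\bigcup_i X_i$, via \Cref{lem:exp_Delta}. Since $S$ is a winning strategy, every proper coloring $c\in C$ has $R_S(c)\ge 1$. Colorings in $Y$ are exactly those with $R_S(c)\ge 2$ and $\deg_H(c)=0$: every $c\in X_1$ lies in $\bigcup_i X_i$ by definition, whatever its degree. Hence every $y\in Y$ satisfies $R_S(y)\ge 2$.

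\textbf{Step 1.} Under the uniform distribution on proper $k$-colorings, \Cref{lem:exp_Delta} gives
$$\sum_{c\in C}R_S(c)\le \frac{n}{k-\Delta}\,|C|.$$
Writing $A=\Xsum$ and $B=\wXsum$, this reads
$$B+\sum_{y\in Y}R_S(y)\le \frac{n}{k-\Delta}\,(A+|Y|).$$

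\textbf{Step 2.} Since $R_S(y)\ge 2$ for $y\in Y$,
$$B+2|Y|\le \frac{n}{k-\Delta}\,(A+|Y|).$$
Rearranging,
$$B-\frac{n}{k-\Delta}A\le \Bigl(\frac{n}{k-\Delta}-2\Bigr)|Y|.$$
By the hypothesis $\frac{n}{k-\Delta}<2$ the right-hand side is nonpositive. So $B\le \frac{n}{k-\Delta}A$, and dividing by $A$ gives the claim.

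\textbf{Main obstacle.} We need $A>0$ so that the ratio is defined. If $A=0$, then $C=Y$ and every coloring has $R_S\ge 2$. This would make $\mathbb E[R_S]\ge 2>\frac{n}{k-\Delta}$, contradicting \Cref{lem:exp_Delta}, provided $C\neq\emptyset$. That holds as long as $k$ is at least the chromatic number, which is implicit since $k>\Delta$ (forced by $\frac{n}{k-\Delta}$ being positive and finite) and a greedy coloring uses at most $\Delta+1$ colors.

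The other point to check carefully is that the probabilistic bound in \Cref{lem:exp_Delta} is valid for the uniform measure on proper colorings. Conditioning on the colors of $N(u)$ and of all other vertices, the color of $u$ is uniform over at least $k-\Delta$ admissible values, so $u$ guesses correctly with probability at most $\frac{1}{k-\Delta}$. I would take that lemma as given.
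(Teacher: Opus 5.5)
Your proof is correct and follows the same route as the paper. Both arguments apply Lemma~\ref{lem:exp_Delta} to a uniformly random proper coloring, split $\sum_{c\in C}R_S(c)$ into the $X_i$-part and the $Y$-part, and use $R_S\ge 2$ on $Y$ together with $\frac{n}{k-\Delta}<2$ to discard the $Y$-contribution; the paper phrases that last step as a mediant inequality, while you use the explicit rearrangement $B-\frac{n}{k-\Delta}A\le(\frac{n}{k-\Delta}-2)|Y|\le 0$. Your extra check that $\sum_i|X_i|>0$ settles a point the paper leaves unstated, though $k>\Delta$ is implicit in Lemma~\ref{lem:exp_Delta} rather than forced by the hypothesis.
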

\begin{proof}
From \cref{lem:exp_Delta} we have that $\mathbb{E}[R_{S}(c)] \leq \frac{n}{k-\Delta}$. Let $b = \mathbb{E}_{c \in B}[R_{S}(c)]$ where $B$ is the set of excluded colorings and denote that $b\geq 2$ since according to the construction each coloring that does not being to $X_{1}$ guesses correctly on at least $2$ vertices. so finally we get: 
\begin{align*}
             \frac{n}{k-\Delta} &\geq \mathbb{E}[R_{S}(c)] \\
             &=\frac{1}{|C|} \sum_{c\in  C}R_{S}(c) \\
            &=\frac{ \wXsum +y\,|Y|}{|X_1|+|X_2|+\ldots +|X_n|+|Y|}  \\
             &\geq \frac{|X_1|+2|X_2|+\ldots+ n|X_n|}{|X_1|+|X_2|+\ldots+ |X_n|}.
        \end{align*}
        Here the last inequality holds because $y\geq 2$ and we assumed $\frac{n}{k-\Delta}< 2$. Note that if $Y$ is empty, the third and fourth lines are equivalent since $|Y|=0$.

\end{proof}
\end{section}
\begin{section}{Analysis of the bipartite graph $H$}
In this section we want to lower bound $\frac{n}{k-\Delta}$, by double counting the number of edges $e(H)$ in the auxillary bipartite graph $H$. First we note that $H$ is a bipartite graph with parts $X_{1}$ and $C \setminus X_{1}$ and since we allow edges only between $c \in X_{1}$ and $c' \notin X_{1}$ we have the following observation. 
\vspace{3em}
\begin{observation}\label{obs:main_equal_double}
$$e(H) = \sum_{c \in X_{1}} \deg(c) = \sum_{c' \notin X_{1}} \deg(c')$$
\end{observation}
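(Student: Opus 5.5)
This is the standard handshake identity for the bipartite graph $H$; the plan is to count each edge once from each side. First I would check that $H$ really is bipartite with parts $X_1$ and $C\setminus X_1$. By the definition of $H$, an edge joins $c$ and $c'$ only when $c\in X_1$ and $c'\notin X_1$, with $c'$ obtained from $c$ by changing the color of a single vertex $u$ subject to (P1)--(P3). So no edge has both endpoints in $X_1$, and none has both endpoints in $C\setminus X_1$. Every edge therefore has exactly one endpoint in $X_1$ and exactly one in $C\setminus X_1$.

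Next I would double count the set of incidences $\{(c,e): c\in e\in E(H)\}$ restricted to endpoints in $X_1$. Each edge contributes exactly one incidence with an endpoint in $X_1$. Hence $\sum_{c\in X_1}\deg(c)$ counts every edge exactly once, and equals $e(H)$. The same argument applied to the part $C\setminus X_1$ gives $\sum_{c'\notin X_1}\deg(c')=e(H)$, which is the statement.

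The one point needing care is that $H$ is a simple graph: adjacency is a relation between colorings, so a pair $\{c,c'\}$ is at most one edge. This holds even if several vertices $u$ could witness (P1)--(P3). In fact, since $c$ and $c'$ differ in exactly one coordinate by (P2), the witness $u$ is unique whenever $c\neq c'$. Also $c=c'$ is impossible, because one lies in $X_1$ and the other does not. So degrees are well defined and each edge is counted once. Colorings in $Y$ have degree $0$ and contribute nothing to the right-hand sum, so including or excluding them does not matter.

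I do not expect any real obstacle. The only thing worth stating explicitly is this simplicity and uniqueness-of-witness remark, so that the degree counts later used to bound $e(H)$ from both sides are unambiguous.
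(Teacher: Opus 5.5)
Your proposal is correct and follows essentially the same approach as the paper: the paper notes that $H$ is bipartite with parts $X_1$ and $C\setminus X_1$, since adjacency is only defined between these two sets, and then counts each edge once from each side. Your extra remarks on simplicity, the uniqueness of the witness $u$, and the degree-$0$ colorings in $Y$ are accurate, but the paper does not spell them out.
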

Using the \Cref{obs:main_equal_double} the goal is to prove the following bounds: 
\begin{lemma}\label{lem:upper_lower}
$$(n-1-\Delta)|X_{1}| \leq e(H) \leq \sum_{i=2}^{n}i(k-2)|X_{i}|$$
\end{lemma}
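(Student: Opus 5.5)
We bound $e(H)$ from below by counting edges at the $X_1$ side and from above by counting edges at the $C\setminus X_1$ side, as \Cref{obs:main_equal_double} allows. In both counts an edge is determined by the vertex $u$ whose color changes and by the new color $c'(u)$, with properness maintained throughout.

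\textbf{Lower bound.} Fix $c\in X_1$ and let $v$ be the unique vertex guessing correctly. For each $u\neq v$ with $uv\notin E(G)$, there are at least $n-1-\Delta$ such $u$. Let $g$ be the guess that $S$ assigns to $u$. This guess depends only on the colors on $N(u)$, which are unchanged when we recolor $u$. Since $u$ guesses incorrectly in $c$, we have $g\neq c(u)$.

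Define $c'$ by setting $c'(u)=g$ and $c'(w)=c(w)$ for all other $w$. If $c'$ is a proper coloring, then $u$ now guesses correctly. Moreover, $v$ still guesses correctly, because $v\notin N(u)$ and $v\neq u$, so $v$'s view is unchanged. Hence $R_S(c')\ge 2$, so $c'\notin X_1$, and (P1)--(P3) hold. This gives an edge of $H$.

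Distinct $u$ give distinct $c'$, since they differ from $c$ in different positions. So $\deg_H(c)\ge n-1-\Delta$, and summing over $X_1$ yields the lower bound.

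\textbf{Main obstacle.} The delicate point is properness of $c'$. If $g$ coincides with a neighbor's color, then $c'$ is improper, and in fact $u$ can never be a correct guesser in any proper extension. I would handle this by counting a vertex $u$ only when its guess is \emph{admissible}, meaning $g\notin c(N(u))$. I would then argue that inadmissible guesses can be replaced WLOG: modify $S$ so that any inadmissible guess becomes an arbitrary admissible color. This does not destroy winning, since the set of correct guesses on proper colorings can only grow. So we may assume $S$ always makes admissible guesses, and the lower bound then follows as above.

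\textbf{Upper bound.} Fix $c'\notin X_1$ with $\deg_H(c')\neq 0$, so $c'\in X_i$ with $i\ge 2$; colorings in $Y$ have degree $0$ and contribute nothing. A neighbor $c$ of $c'$ is obtained by choosing $u$ with $u$ correct in $c'$ (at most $i$ choices, by (P3)) and changing $c'(u)$ to a different color $c(u)$ (at most $k-1$ choices).

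To get $k-2$ rather than $k-1$, I must exclude one more color for each such $u$. Note that in $c$, the vertex $u$ is wrong, and the unique correct vertex $v$ must be non-adjacent to $u$ and already correct in $c'$. I would try to show that one specific color per $u$ is impossible. The natural candidate is the following: since $c'$ has at least two correct vertices, only one of which can survive, and since changing $u$'s color affects only the guesses of $N(u)$, the correct vertices of $c'$ other than $u$ and $v$ must be killed, and this can happen only through neighbors of $u$. If that fails, I would instead lose an additive term and absorb it into the constants later.

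Summing over $u$ and over $c'\in X_i$ then gives $\deg(c')\le i(k-2)$, and summing over $i$ gives $e(H)\le\sum_{i\ge2} i(k-2)|X_i|$.
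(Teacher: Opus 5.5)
Your lower bound is correct, and it is more careful than the paper's proof of Claim~\ref{clm:2.4clm}. That proof recolors $u$ to its guess $a_u$ without checking that the result is proper. Your normalization to admissible guesses is genuinely needed. On $K_2\cup K_2$ with $k=3$, let one edge play the cyclic strategy (each endpoint guesses the other's color plus $1$ modulo $3$), and let each endpoint of the other edge guess its neighbor's color. This strategy is winning and every proper coloring lies in $X_1$, so $e(H)=0<2|X_1|=(n-1-\Delta)|X_1|$. So you are proving the lemma for the normalized strategy, which is all Theorem~\ref{thm:main_bound} requires.

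The genuine gap is the upper bound: you never actually exclude a second color.

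\begin{itemize}
\item Your candidate mechanism (the other correct vertices of $c'$ must be killed through $N(u)$) does not identify any forbidden value of $c(u)$.
\item For $i=2$, with correct set $\{u,v\}$, there is nothing to kill, so the mechanism gives no saving at all.
\item The fallback of absorbing an additive loss into constants does not prove the inequality as stated.
\end{itemize}

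The missing idea is the same properness constraint you used for the lower bound. Since $c$ agrees with $c'$ off $u$ and is proper, $c(u)\notin c'(N(u))$; also $c(u)\neq c'(u)$. Because $c'$ is proper, these exclusions are disjoint. Hence each correct $u$ yields at most $k-1-|c'(N(u))|$ neighbors of $c'$, which is at most $k-2$ whenever $N(u)\neq\emptyset$. This is the paper's argument for Claim~\ref{clm:clm_upper}.

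That argument, yours and the paper's alike, silently needs $u$ to be non-isolated. An isolated correct $u$ in some $c'\in X_2$ contributes all $k-1$ recolorings. You can handle this in either of two ways:
\begin{itemize}
\item Delete isolated vertices first. For $k\ge 2$ they do not affect whether a winning strategy exists, and removing them only lowers $n$ in the final bound.
\item Alternatively, for a winning $S$ with $k\ge 2$, every coloring has a correct vertex among the non-isolated ones. So an isolated correct $u$ contributes only when $i=2$, and then its partner contributes nothing. This gives $\deg(c')\le k-1\le 2(k-2)$ for $k\ge 3$.
\end{itemize}
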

\begin{proof}
Let the colorings $c \in X_{1}$ and $c' \notin X_{1}$. The goal is to prove both upper and lower bound by using the \Cref{obs:main_equal_double} and finding a way to bound the degrees under colorings $c$ and $c'$.
\end{proof} 
The proof of \Cref{lem:upper_lower} will be explained in the two following claims.
\begin{claim}\label{clm:clm_upper}
For any proper coloring $c' \in X_{i}, i \neq 1$, 
$$\deg(c') \leq i(k-2)$$
\end{claim}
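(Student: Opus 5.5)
To bound $\deg(c')$ for a fixed proper coloring $c' \in X_i$, $i \geq 2$, I will count its neighbours $c \in X_1$ in $H$ by looking at how each one is obtained from $c'$. By (P2), any neighbour $c$ differs from $c'$ in exactly one vertex $u$. By (P3), $u$ must be a vertex that guesses correctly under $c'$, and there are exactly $i$ such vertices because $R_S(c') = i$. So the neighbours of $c'$ split according to which of these $i$ correct guessers is recoloured. Once $u$ is fixed, $c$ is determined by the single new colour $c(u)$. It therefore suffices to show that for each correct guesser $u$ there are at most $k-2$ admissible values of $c(u)$. Summing over the $i$ choices of $u$ then gives $\deg(c') \le i(k-2)$.

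Fix a correct guesser $u$ of $c'$. I first exclude $c(u) = c'(u)$, since that would make $c = c'$, and $c' \notin X_1$ while $c \in X_1$. I then need to exclude one further colour. Here I will use that the guess of any $w \neq u$ depends only on the colours of $N(w)$, and that these colours agree in $c$ and $c'$ except possibly at $u$.

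Consider first the case where $c'$ has a correct guesser $w \neq u$ with $uw \notin E(G)$. Then $w$ does not see $u$, so $w$'s guess and hat colour are the same in $c$ as in $c'$, and $w$ is correct in $c$. Since $c \in X_1$, $w$ must be the unique correct vertex $v$ of $c$, and that is consistent with (P1). So in this case the count is fine, but I do not get a second forbidden colour from this argument.

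To get the second forbidden colour I will use properness together with (P1). Since $c$ must be a proper colouring, $c(u)$ cannot equal the colour of any neighbour of $u$; if $u$ has at least one neighbour, this forbids at least one further colour besides $c'(u)$, which gives $k-2$. If $u$ is isolated, I will argue instead with the unique correct vertex $v$ of $c$. Since $u$ is isolated, $v \neq u$ and $uv \notin E(G)$, so $v$'s view is unchanged and $v$ was already correct in $c'$. Also, $u$ is no longer correct in $c$, and every other correct guesser of $c'$ that does not see $u$ stays correct. With $i \ge 2$ this gives at least two correct vertices in $c$ unless $i = 2$. I expect this corner case to be the main obstacle. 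My first attempt will be to note that isolated vertices can be removed from $G$ without changing the relevant counts, or that one may assume $G$ has no isolated vertices, so that properness always supplies the second excluded colour.

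Putting this together, each of the $i$ correct guessers contributes at most $k-2$ neighbours, so $\deg(c') \le i(k-2)$.
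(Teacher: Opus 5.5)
Your counting matches the paper's: split the neighbours of $c'$ by the recoloured vertex $u$, which by (P3) is one of the $i$ correct guessers, then exclude $c'(u)$ and the colours in $c'(N(u))$. The paper simply asserts that this leaves $k-2$ choices for every such $u$, which silently assumes $N(u)\neq\emptyset$. You rightly notice that an isolated $u$ escapes this, and that only $i=2$ matters there.

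The genuine gap is in your proposed fix. The claim concerns $H$ built from the given $G$, and isolated vertices really do change $\deg(c')$. Suppose $u$ is isolated and $w$ is the other correct guesser of $c'$. Every one of the $k-1$ recolourings of $u$ leaves exactly $w$ correct, and (P1)--(P3) hold with $v=w$. So $u$ contributes exactly $k-1>k-2$ neighbours. The per-vertex bound is false there, not merely unproved. Hence neither deleting isolated vertices nor assuming there are none proves the statement as written. Such a reduction could only be made at the level of Theorem~\ref{thm:main_bound}, and would need its own justification.

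The missing idea is to use that $S$ is winning, so that $w$ pays for $u$.
\begin{enumerate}[(1)]
\item Since $u$ sees nothing, it guesses a constant $a_u$. Any proper colouring of $G-u$ extends to $G$ by giving $u$ a colour other than $a_u$ (possible for $k\ge 2$). Hence $S$ restricted to $G-u$ wins on $G-u$.
\item Suppose $c\in X_1$ is adjacent to $c'$ via $w$. Its unique correct vertex $v$ has $v\neq w$ and $vw\notin E(G)$. So $v$ has the same colour and the same view in $c$ as in $c'$, hence was correct in $c'$, forcing $v=u$.
\item Then $c$ restricted to $G-u$ is a proper colouring of $G-u$ on which nobody guesses correctly, contradicting (1).
\end{enumerate}
So $w$ contributes no neighbours, and $\deg(c')\le k-1\le 2(k-2)$ provided $k\ge 3$.

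The proviso $k\ge 3$ is really needed. For $k=2$, take $G$ to be an edge $ab$ plus an isolated vertex $u$. Let $a$ guess the colour not on $b$, $b$ guess the colour on $a$, and $u$ guess $1$. The colouring with $c'(u)=c'(a)=1$ and $c'(b)=2$ lies in $X_2$ and has degree $1>0=2(k-2)$. This is harmless for the paper, which divides by $k-2$ anyway.
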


\begin{proof}
Let $c' \in X_{i}, i \neq 1$ and let $c \in X_{1}$ to be adjacent to $c'$. According to the definition of $H$ the only difference between $c'$ and $c$ is the color of vertex $u$. Under coloring $c'$ vertex $u$ guesses its color correctly and under coloring $c$, $u$ must guess incorrectly because the single correct guess under $c$ belongs to some other vertex. Because no color was modified, the neighborhood of $u$ remains unchanged. If we keep the same guess then the coloring would remain in $X_{i}$ and we want to push the state of the vertex from $X_{i}$ to $X_{1}$. We want to ensure that $u$ becomes incorrect and to do that we must subtract the color that is assigned to $u$ under the coloring $c'$.

We know that an edge in $H$ connects a coloring under $c' \in X_{i}$ that $i$ players guessed correctly with a coloring $c \in X_{1}$, where exactly $1$ player guesses correctly. Furthermore, because $c$ must be a valid proper coloring, the new color assigned to $u$ cannot match any color in its neighborhood $c'(N(u))$. Because $c'$ is a valid proper coloring, $u$'s correct color is disjoint from its neighborhood. Therefore, we must subtract $\vert{}c'(N(u))\vert{}$ proper colors.
So the valid ways for each of the vertices to change their color is $k-2$. To derive our bound because $c$ is adjacent to $c'$,  $\deg(c') \leq \sum_{i \in X_{i}}(k-2) = i(k-2)$ so there are at most $i(k-2)$ colorings $c$ adjacent to $c'$.
\end{proof}
\begin{claim}\label{clm:2.4clm} 
For any $c\in X_1$, 
    $$\deg (c)\geq n-1-\Delta.$$   
\end{claim}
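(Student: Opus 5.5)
Fix $c\in X_1$ and let $v$ be the unique vertex guessing correctly under $c$. The plan is to exhibit, for each vertex $u$ with $u\neq v$ and $uv\notin E(G)$, a neighbor $c'$ of $c$ in $H$ obtained by changing only the color of $u$. Distinct choices of $u$ give distinct colorings $c'$ (they differ from $c$ in different coordinates), so $\deg(c)$ is at least the number of admissible $u$. Since $v$ has at most $\Delta$ neighbors, there are at least $n-1-\Delta$ vertices $u\notin N[v]$. This gives the claimed bound once each such $u$ is shown to produce a valid neighbor.

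\textbf{Constructing $c'$.} For a fixed admissible $u$, let $g_u$ be the guess of $u$ under $S$. This guess depends only on $c|_{N(u)}$, which will be left unchanged. Define $c'$ by $c'(u)=g_u$ and $c'(w)=c(w)$ for all $w\neq u$. Then (P1) holds because $u\neq v$ and $uv\notin E(G)$, and (P2) holds by construction. For (P3), the guess of $u$ under $c'$ is still $g_u$, since $N(u)$ is not recolored, and it now equals $c'(u)$. Moreover $c'\neq c$, because $u$ guesses incorrectly in $c$, so $g_u\neq c(u)$.

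\textbf{Showing $c'\notin X_1$.} Since $u\notin N(v)$, the neighborhood of $v$ is unchanged, so $v$ still guesses $c(v)=c'(v)$ correctly. Together with $u$, this gives $R_S(c')\geq 2$, hence $c'\notin X_1$, and the edge $cc'$ exists in $H$. This is exactly why non-adjacency of $u$ and $v$ is required.

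\textbf{Main obstacle: properness of $c'$.} The difficulty is that $g_u$ might coincide with a color in $c(N(u))$, in which case $c'$ is not a proper coloring and so is not a vertex of $H$ (the paper restricts to proper colorings). I would first try to argue that a winning strategy may be assumed, without loss of generality, to always guess a color absent from the visible neighborhood. Under the proper-coloring promise, such a guess is certainly wrong, so replacing it with any unseen color never decreases the set of correct guesses, and the modified strategy is still winning. To carry this out, I would fix such a normalized $S$ at the outset of the construction of $H$. Alternatively, if the construction must work for an arbitrary $S$, I would restrict the count to those admissible $u$ whose guess is consistent with the neighborhood. With this normalization every admissible $u$ yields a proper $c'$, and $\deg(c)\geq n-1-\Delta$ follows.
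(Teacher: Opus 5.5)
Your argument is the paper's argument: for each $u\notin N[v]$, recolor $u$ to its own guess and count the resulting colorings $c_u$. You are more careful than the paper, though. The paper only asserts that $c$ and $c_u$ are adjacent in $H$. It never checks that $c_u\notin X_1$, which your observation that $v$'s view is untouched because $u\notin N(v)$ supplies. It also never checks that $c_u$ is a proper coloring, i.e.\ a vertex of $H$ at all.

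The properness point is a real gap in the paper's proof, and your normalization is the right repair. A guess inside $c(N(u))$ is always wrong under the proper-coloring promise. An unseen color always exists for views coming from proper colorings. So the replacement keeps the strategy winning, and it costs nothing for Theorem~\ref{thm:main_bound}.

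Drop the fallback of ``restricting the count'' for an arbitrary $S$. It cannot yield $n-1-\Delta$, and for an arbitrary winning strategy the claim is in fact false. For example, take $G=2K_2$ with edges $xy$ and $zw$, and $k=3$:
\begin{itemize}
\item let $x$ guess the cyclic successor of $c(y)$ in $\{1,2,3\}$ and $y$ the cyclic successor of $c(x)$, so exactly one of $x,y$ is correct on every proper coloring;
\item let $z$ guess $c(w)$ and $w$ guess $c(z)$, which is always wrong.
\end{itemize}
This strategy is winning. Every proper coloring lies in $X_1$, so $H$ has no edges, yet $n-1-\Delta=2$. The claim should therefore be stated and proved, as you do, for a normalized winning strategy fixed before $H$ is built.
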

\begin{proof}
    Let $c \in X_1$, and let $v \in V(G)$ be the vertex that guesses its color correctly in $c$. Let $u \in V(G)\setminus \{v\}$ be a vertex not adjacent to $v$, and let $a_u$ be the color that $u$ guesses under the coloring $c$. 
        Let $c_u \in C$ be the coloring where:
        $$c_u(x):= \begin{cases} a_u \text{ if }x = u \\c(x) \text{ otherwise.}
        \end{cases}$$
        Then $c$ and $c_u$ are adjacent in $H$. For each $u \in V(G)$ not adjacent to $v$, the colorings $c_u$ are distinct, and there are at least $n-1-\Delta$ vertices not adjacent to $v$, not including $v$ itself. Thus, $\deg(c)\geq n-1-\Delta$. 
\end{proof}
\vspace{5em}
Using now both \Cref{clm:2.4clm,clm:clm_upper} in combination with the \cref{obs:main_equal_double} we get the desired result as follows 

$$e(H)=\sum_{c\in X_1}  \deg (c)\geq (n-1-\Delta)|X_1|,$$ and $$e(H)=\sum_{c' \notin X_1} \deg(c')\leq \sum _{c' \notin X_1, c' \in X_i} i(k-2)=\sum_{i=2}^{n} i(k-2)|X_i|.$$
This completes the proof of \Cref{lem:upper_lower}.

The goal now is to find a way to lower bound $\frac{n}{k}$ but to do that we need first to bound the fraction: $$\frac{\wXsum}{\Xsum}$$
\begin{lemma}\label{lem:sum_lower_bound}
$$\wXsum \geq\Xsum + \frac{1}{2}|X_1|\frac{n-1-\Delta}{k-2}$$
\end{lemma}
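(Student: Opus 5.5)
The plan is to combine the two sides of \Cref{lem:upper_lower} and compare the weighted sum $\wXsum$ with the unweighted sum $\Xsum$ term by term.

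\textbf{Step 1: reduce to a statement about $i\ge 2$.} Subtracting $\Xsum$ from $\wXsum$ gives
$$\wXsum - (\Xsum) = \sum_{i=2}^{n}(i-1)|X_i|.$$
So it suffices to prove
$$\sum_{i=2}^{n}(i-1)|X_i| \geq \frac{1}{2}\,|X_1|\,\frac{n-1-\Delta}{k-2}.$$

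\textbf{Step 2: compare $i-1$ with $i$.} For every $i\geq 2$ we have $i-1\geq \frac{i}{2}$. Applying this termwise gives
$$\sum_{i=2}^{n}(i-1)|X_i| \geq \frac{1}{2}\sum_{i=2}^{n} i|X_i|.$$
This is exactly where the factor $\frac12$ in the statement comes from. It is tight only at $i=2$, which is the dominant regime when $\frac{n}{k-\Delta}<2$.

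\textbf{Step 3: bring in the edge count of $H$.} By \Cref{lem:upper_lower}, combining its lower and upper bounds on $e(H)$,
$$(n-1-\Delta)|X_1| \leq e(H) \leq (k-2)\sum_{i=2}^{n} i|X_i|.$$
Assuming $k>2$ (otherwise the theorem is trivial), dividing by $k-2$ gives
$$\sum_{i=2}^{n} i|X_i| \geq |X_1|\,\frac{n-1-\Delta}{k-2}.$$
Substituting this into Step 2 and then into Step 1 yields the claim.

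The argument is a short chain of inequalities, and all of the combinatorial content sits in \Cref{lem:upper_lower}. The only real obstacle is the bookkeeping: one must use $i-1\ge i/2$ rather than the cruder $i-1\ge 1$, since the latter would give $\sum_{i\ge2}|X_i|$ and could not be matched with the upper bound on $e(H)$. One should also check that $k-2>0$ and $n-1-\Delta\ge 0$, so that the division and the direction of the inequalities are valid. If $n-1-\Delta<0$, the right-hand side is negative and the claim holds trivially, because $\wXsum \ge \Xsum$.
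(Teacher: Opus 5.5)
Your proposal is correct and matches the paper's proof step for step. The paper also writes $\wXsum$ as $\Xsum + \sum_{i=2}^{n}(i-1)|X_i|$, uses $2(i-1)\ge i$ for $i\ge 2$ to get the factor $\frac12$, and then applies \Cref{lem:upper_lower} divided by $k-2$. Your separate case $n-1-\Delta<0$ is unnecessary, since $\Delta\le n-1$ in a simple graph.
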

\begin{proof}
Dividing the inequality from \Cref{lem:upper_lower} by $k-2$, we obtain
\begin{equation}\label{eq:3.2dividedbyk}
     2|X_2|+3|X_3|+4|X_4|+\ldots +n|X_n| \geq \frac{|X_1|(n-1-\Delta) }{k-2}.
\end{equation}
Thus, 
\begin{align*}
    \wXsum
    &= \Xsum + |X_2| + 2|X_3| + 3|X_4| + \cdots + (n-1)|X_n| \\
    &= \Xsum
       + \frac{1}{2}\bigl(
           2|X_2| + 4|X_3| + 6|X_4| + \cdots + 2(n-1)|X_n|
         \bigr) \\
    &\geq \Xsum
       + \frac{1}{2}\bigl(
           2|X_2| + 3|X_3| + 4|X_4| + \cdots + n|X_n|
         \bigr) \\
    &\geq \Xsum
       + \frac{1}{2}|X_1|
         \frac{n-1-\Delta}{k-2},
       \qquad\text{by \eqref{eq:3.2dividedbyk}.}
\end{align*}
\end{proof}
Now we bound the denominator $\Xsum$.
\begin{lemma}\label{3.6:lem-den} 
If $\frac n {k-\Delta} < 2$, then
        $$\Xsum \leq \frac{|X_1|}{2-\frac{n}{k-\Delta}}.$$
    \end{lemma}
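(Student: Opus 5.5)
We derive the bound directly from \Cref{lem:eq_first}. Write $\mu=\frac{n}{k-\Delta}$, and abbreviate $A=\Xsum$ and $W=\wXsum$. \Cref{lem:eq_first} is stated under the same hypothesis $\mu<2$, and it gives $\mu A \geq W$.

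Next we bound $W$ from below. Every coloring counted in $X_i$ with $i\ge 2$ carries weight $i\ge 2$, and the colorings in $X_1$ carry weight $1$. Hence
\begin{equation*}
W = |X_1| + \sum_{i=2}^n i|X_i| \geq |X_1| + 2\sum_{i=2}^n |X_i| = 2A - |X_1|.
\end{equation*}
Combining the two inequalities gives $\mu A \ge 2A - |X_1|$, that is, $(2-\mu)A \le |X_1|$.

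Since $\mu<2$, the factor $2-\mu$ is strictly positive. Dividing by it yields
\begin{equation*}
A\le \frac{|X_1|}{2-\frac{n}{k-\Delta}},
\end{equation*}
which is the claim.

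We also need $k>\Delta$ so that $\mu$ is defined and positive. This holds whenever $\mu<2$ is meaningful; in any case, with $k\le\Delta$ colors the statement is vacuous for the intended use. There is no real obstacle here. The only subtle point is that the hypothesis $\mu<2$ is used twice: once to invoke \Cref{lem:eq_first}, which needs it to discard $Y$, and once to divide by $2-\mu$ without flipping the inequality.
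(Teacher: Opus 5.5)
Your argument is correct and is essentially the paper's. The paper sets $a=|X_1|/(\Xsum)$ and $b=1-a$, uses $a+2b\le \frac{n}{k-\Delta}$ from \Cref{lem:eq_first}, and solves for $a\ge 2-\frac{n}{k-\Delta}$; this is exactly your chain $2A-|X_1|\le W\le \mu A$ divided through by $A$. Your version avoids dividing by $A$. On your side remark: $k>\Delta$ is a standing assumption inherited from \Cref{lem:exp_Delta}, as in the paper, rather than something implied by $\mu<2$, since $k<\Delta$ gives $\mu<0<2$.
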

    \begin{proof}
        Let $$a=\frac{|X_1|}{\Xsum},b=\frac{\tXsum}{\Xsum}.$$
       These give us the linear system: 
       \begin{align*}
           &a+b=\frac{\Xsum}{\Xsum}=1 \\
           &a+2b=\frac{|X_1|+2(|X_2|+|X_3|+\ldots+ |X_n|)}{|X_1|+|X_2|+|X_3|+\ldots+ |X_n|}\leq \frac{|X_1|+2|X_2|+3|X_3|+\ldots+ n|X_n|}{|X_1|+|X_2|+|X_3|+\ldots+ |X_n|}\leq \frac{n}{k-\Delta}.
       \end{align*}
        The last inequality is from Lemma $\ref{lem:eq_first}$, which we can apply since $\frac n {k-\Delta}< 2$.
        Solving the linear system, we obtain that 
        $$a\geq 2-\frac{n}{k-\Delta}.$$

        By definition, $a(\Xsum)=|X_1|$, and since $a\geq 2-\frac{n}{k-\Delta}$, we have $$|X_1|=a(\Xsum)\geq (2-\frac{n}{k-\Delta})(\Xsum), $$ so $$\Xsum \leq \frac{|X_1|}{2-\frac{n}{k-\Delta}}.$$
    \end{proof}

Putting these last two lemmas together, we obtain:
\vspace{3em}

\begin{lemma}\label{3.7lem:bound} 
    $$\frac n {k-\Delta} \geq \min \left(2,\, 1+\left(2-\frac{n}{k-\Delta}\right)\left(\frac{1}{2}\frac{n-1-\Delta}{k-2}\right)\right).$$
\end{lemma}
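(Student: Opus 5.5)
We argue by cases on the value of $\frac{n}{k-\Delta}$. If $\frac{n}{k-\Delta}\geq 2$, the left-hand side is already at least $2\geq \min(2,\cdot)$ and there is nothing to prove. So assume from now on that $\frac{n}{k-\Delta}<2$. This is exactly the hypothesis needed to invoke \Cref{lem:eq_first} and \Cref{3.6:lem-den}, and we will show that $\frac{n}{k-\Delta}$ is at least the second argument of the minimum.

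Start from \Cref{lem:eq_first}:
$$\frac{n}{k-\Delta}\geq \frac{\wXsum}{\Xsum}.$$
Lower bound the numerator with \Cref{lem:sum_lower_bound}, which gives
$$\frac{n}{k-\Delta}\geq 1+\frac{1}{2}\,\frac{n-1-\Delta}{k-2}\cdot\frac{|X_1|}{\Xsum}.$$
The coefficient $\frac{1}{2}\frac{n-1-\Delta}{k-2}$ is nonnegative: we may assume $\Delta\leq n-1$ and $k>2$, since otherwise the statement is trivial or the quantities are degenerate. It therefore suffices to lower bound the ratio $\frac{|X_1|}{\Xsum}$. By \Cref{3.6:lem-den}, this ratio is at least $2-\frac{n}{k-\Delta}$, which is positive under our assumption. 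Substituting yields
$$\frac{n}{k-\Delta}\geq 1+\left(2-\frac{n}{k-\Delta}\right)\left(\frac{1}{2}\,\frac{n-1-\Delta}{k-2}\right),$$
and this is at least the minimum in the statement.

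The main point needing care is nondegeneracy. We must ensure $\Xsum>0$ so that the ratios are defined. Here we use that $S$ is a winning strategy, so every coloring has $R_S(c)\geq 1$. The $X_1$-colorings are exactly the colorings with one correct guess, and the colorings in $Y$ have at least two correct guesses. If $\Xsum=0$, then every coloring lies in $Y$, so $\mathbb{E}[R_S]\geq 2$. This contradicts $\frac{n}{k-\Delta}<2$ via \Cref{lem:exp_Delta}. Hence $\Xsum>0$.

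We also need the sign conditions $n-1-\Delta\geq 0$ and $k>2$ to justify multiplying the inequalities, and we handle them briefly as degenerate cases.
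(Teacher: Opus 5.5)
Your proof is correct and is essentially the paper's argument: the same case split on whether $\frac{n}{k-\Delta}\geq 2$, then the same three lemmas (\Cref{lem:eq_first}, \Cref{lem:sum_lower_bound}, \Cref{3.6:lem-den}), except that you apply the numerator bound before the denominator bound while the paper applies them in the reverse order. Your explicit check that $\Xsum>0$ is a useful detail that the paper leaves implicit; it rests on the fact that $Y$-colorings have at least two correct guesses, combined with \Cref{lem:exp_Delta}.
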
 
\vspace{1em}
\begin{proof}
If $\frac n {k-\Delta} \geq 2,$ then the lemma holds. Thus, we  assume $\frac n {k-\Delta} < 2$, allowing us the use of Lemma \ref{lem:eq_first} and Lemma \ref{3.6:lem-den}.
    \begin{align*}
        \frac{n}{k-\Delta} &\geq \frac{\wXsum}{\Xsum}  & \text{by \ref{lem:eq_first}}
        \\ & = 1+\frac{|X_2|+2|X_3|+\ldots +(n-1)|X_n|}{\Xsum} 
        \\&\geq 1+ \left(2-\frac{n}{k-\Delta}\right) \frac{|X_2|+2|X_3|+\ldots+ (n-1)|X_n|}{|X_1|} &\text{by \ref{3.6:lem-den}}
        \\& \geq 1+\left(2-\frac{n}{k-\Delta}\right)\left(\frac{1}{2}\frac{n-1-\Delta}{k-2}\right). &\text{by \ref{lem:sum_lower_bound}}
    \end{align*}
\end{proof}
\newpage
\begin{proof}[Proof of Theorem \ref{thm:main_bound}]
    Let $G$ be a graph on $n$ vertices where each vertex is colored one of $k$ colors, and let $S$ be a winning strategy on $G$. The goal is to bound $k$ from above.
    
   We start by applying Lemma $\ref{3.7lem:bound}$. 
   $$\frac{n}{k-\Delta}\geq \min \left(2, 1+\left(2-\frac{n}{k-\Delta}\right)\left(\frac{1}{2}\frac{n-1-\Delta}{k-2}\right)\right).$$

   There are two possible cases:

   \medskip

   \noindent\textbf{If $2 \leq 1+\left(2-\frac{n}{k-\Delta}\right)\left(\frac{1}{2}\frac{n-1-\Delta}{k-2}\right)$: }

    By Lemma \ref{3.7lem:bound}, $\frac n {k-\Delta} \geq 2$. Observe that for all $n \geq 2$ and $0 \leq \Delta \leq n-1$, 
$$ k \leq \frac{n}{2} + \Delta \leq  \Delta+\frac{3}{2}+ \frac{\sqrt{2n^{2}+2n\Delta-10n+9}}{2}$$. 
    
   
   
    \noindent\textbf{If $ 2 \geq 1+\left(2-\frac{n}{k-\Delta}\right)\left(\frac{1}{2}\frac{n-1-\Delta}{k-2}\right) $:} 
    
    In this case, Lemma \ref{3.7lem:bound} gives
    $$\frac{n}{k-\Delta}\geq  1+\left(2-\frac{n}{k-\Delta}\right)\left(\frac{1}{2}\frac{n-1-\Delta}{k-2}\right).$$
    Multiplying through by $2(k-2)(k-\Delta)$, $$2n(k-2) \geq 2(k-\Delta)(k-2) + (2(k-\Delta) -n)(n-1-\Delta).$$ 

    Solving the quadratic for $k$, we get
    \begin{align*}
         k &\leq \frac{4\Delta+6+\sqrt{8n^{2}+8n\Delta-40n+36}}{4}
         \\ &= \Delta+\frac{3}{2}+ \frac{\sqrt{2n^{2}+2n\Delta-10n+9}}{2}.
    \end{align*}
    
\medskip
In both  of the case that was explained before, it holds that $k\leq \Delta+\frac{3}{2}+ \frac{\sqrt{2n^{2}+2n\Delta-10n+9}}{2}.$

The equality is attained for all winning strategies $S$ for the proper hat guessing game and we can see that the following holds:
$$\operatorname{HG}_{P}(G) \leq \Delta+\frac{3}{2}+ \frac{\sqrt{2n^{2}+2n\Delta-10n+9}}{2}. $$
We can see in the previous bound that the equality holds only if we replace $G = K_{n}$ and we have  $\operatorname{HG}_{P}(K_n) = 2n-1$ and we know that this is true by \cite{adriaensen2026hatguessingpropercolorings}
\end{proof}
\newpage
\begin{proof}[Proof of Corollary \ref{cor:G_n,1/2}] 
    It is known (see Bollobás, \cite{bollobas2001random}) that the maximum degree of the random graph is $(1+o(1))n/2$. To be more accurate, with high probability, $\Delta \leq \frac n 2 + (1+o(1)) \sqrt{\frac{n\log n}2}$. Let $\Delta = \frac n 2+ \epsilon$, where $\epsilon=o(n)$. We apply our upper bound from \Cref{thm:main_bound} to obtain with high probability,
    \begin{align*}
        \operatorname{HG}_{P}(G) &\leq \Delta + \frac{3}{2} + \frac{\sqrt{2n^2 + 2n\Delta - 10n + 9}}{2}
        \\&= \frac n 2 + \epsilon + \frac 3 2 + \frac 1 2 \sqrt{2n^2 + 2n\left(\frac n 2 + \epsilon\right) - 10n + 9}
        \\&= \frac n 2 + \epsilon + \frac 3 2 + \frac 1 2 \sqrt{2n^2 + n^2 + 2n\epsilon - 10n + 9}
        \\&= \frac n 2 + \epsilon + \frac 3 2 + \frac 1 2 \sqrt{3n^2 + 2n\epsilon - 10n + 9}
        \\&\leq \frac n 2 + o(n) + \frac 1 2 \sqrt{3n^2 + o(n^2)}
        \\&\leq (1+o(1)) \frac{1+\sqrt 3}{2}n.
    \end{align*}

\end{proof}
\begin{proof}[Proof of Corollary \ref{cor:epsilon_delta}]
Let $G$ be a graph on $n$ vertices. By the main bound from \Cref{thm:main_bound} ,
\begin{align*}
\operatorname{HG}_{P}(G) \leq \Delta + \frac{3}{2} + \frac{1}{2}\sqrt{2n^2 + 2n\Delta - 10n + 9}.
\end{align*}

For $n \ge 1$, we note that $-10n+9 \le -1 < 0$, so the expression inside the square root is strictly bounded by $2n^2 + 2n\Delta$. We can factor this as $(2n)(n+\Delta)$ and apply the Arithmetic mean-Geometric mean inequality ($\sqrt{ab} \leq \frac{a+b}{2}$) to obtain:
\begin{align*}
\operatorname{HG}_{P}(G) &\leq \Delta + \frac{3}{2} + \frac{1}{2}\sqrt{(2n)(n+\Delta)} \leq \Delta + \frac{3}{2} + \frac{1}{2}\left(\frac{2n + n + \Delta}{2}\right) = \frac{5\Delta}{4} + \frac{3n}{4} + \frac{3}{2}.
\end{align*}
    To prove the bound when the maximum degree $\Delta$ is less than $(1-\gamma)n$ for some $\gamma \in (0,1]$, observe that
    $$ \operatorname{HG}_{P}(G) \leq \frac{5(1-\gamma)n}{4} + \frac{3n}{4} + \frac{3}{2} = \left(2 - \frac{5\gamma}{4}\right)n + \frac{3}{2}. $$ 
    If we choose $\zeta(\gamma)$ such that $0 < \zeta < \frac{5\gamma}{4}$, then for sufficiently large $n$, $\operatorname{HG}_{P}(G) \leq (2-\zeta)n$.
\end{proof}
\begin{subsection}{AI Disclosure}
I acknowledge the use of Google Gemini for proof checking of \Cref{cor:epsilon_delta} and for grammar checks and text improving in the introduction.
\end{subsection}
\end{section}
\begin{section}{Acknowledgements}
We would like to thank Michal Opler for revising this note and for the fruitful discussions on the broad topic of hat guessing games. This work has been partially supported by the MEYS of the Czech Republic
under the INTER-EXCELLENCE II program (project No. LUAUS26294), the European Union under the project Robotics and advanced industrial production (reg. no. CZ.02.01.01/00/22\_008/0004590) and also supported by the Grant Agency of the Czech Technical University in Prague, grant No.SGS26/181/OHK3/3T/18.
\end{section}
\printbibliography
\end{document}